\newcommand{\q}{\quad}
\newcommand{\ee}{{\rm e}\hspace{1pt}}
\newcommand{\dd}{\hspace{0.5pt}{\rm d}\hspace{0.5pt}}
\journal{J.\ of Computational and Applied Mathematics}
\numberwithin{equation}{section}
\newtheorem{lemma}[]{Lemma}
\numberwithin{lemma}{section}
\newtheorem{theorem}[]{Theorem}
\numberwithin{theorem}{section}
\numberwithin{example}{section}
\definecolor{orange}{rgb}{1,0,0}
\begin{document}

\begin{frontmatter}
\title{{\bf Explicit exponential Runge--Kutta methods \\ of high order for parabolic problems \tnoteref{label1}}}
\tnotetext[label1]{This work was supported by the FWF doctoral program `Computational Interdisciplinary Modelling' W1227. The work of the first author was in addition supported by the Tiroler Wissenschaftsfond
grant UNI-0404/1284.}
\author{Vu Thai Luan and Alexander Ostermann}
\ead{vu.thai-luan@uibk.ac.at, alexander.ostermann@uibk.ac.at}
\address{Institut f\"ur Mathematik, Universit\"at Innsbruck, Technikerstr.~19a, \\ A--6020 Innsbruck, Austria}

\begin{abstract}
\small
\indent Exponential Runge--Kutta methods constitute efficient integrators for semilinear stiff problems. So far, however, explicit exponential Runge--Kutta methods are available in the literature up to order~4 only. The aim of this paper is to construct a fifth-order method. For this purpose, we make use of a novel approach to derive the stiff order conditions for high-order exponential methods. This allows us to obtain the conditions for a method of order~5 in an elegant way. After stating the conditions, we first show that there does not exist an explicit exponential Runge--Kutta method of order~5 with less than or equal to~6 stages. Then, we construct a fifth-order method with 8 stages and prove its convergence for semilinear parabolic problems. Finally, a numerical example is given that illustrates our convergence bound.
\small
\end{abstract}
\begin{keyword}
\small
exponential integrators \sep exponential Runge--Kutta methods  \sep  stiff order conditions,
error bounds \sep semilinear parabolic problems \sep stiff problems
\end{keyword}

\end{frontmatter}
%% SECTION 1: ------------------------------------------------------------------
\section{Introduction}\label{sc1}
In this paper, we are concerned with the construction of high-order
exponential Runge--Kutta methods for the time discretization of stiff semilinear problems
\begin{equation} \label{eq1.1}
u'(t)=F(u(t)) = A u(t) + g(u(t)), \qquad u(t_0)=u_0
\end{equation}
on the interval $t_0\le t\le T$.
Parabolic partial differential equations, written as abstract ordinary differential equations in some Banach space and their spatial discretizations are typical examples of such problems.
Our main interest is the case of stiff problems where $A$ has a large norm or is an
unbounded operator. On the other hand, the nonlinearity $g$ is assumed to satisfy a local Lipschitz condition with a moderate Lipschitz constant in a strip along the exact solution.

In recent years, exponential integrators turned out to be very competitive for stiff problems, see
\cite{HLS98,KT05}. For a detailed overview of such integrators and their implementation, we refer to \cite{HO10}.
The main idea behind these methods is to treat the linear part of problem (\ref{eq1.1}) exactly and the nonlinearity
in an explicit way. An early paper by Friedli \cite{F78} derived the class of exponential Runge--Kutta methods for non-stiff problems. He used classical Taylor series expansion for this purpose. For stiff problems, methods of orders up to four were constructed in \cite{HO05b}. Very recently, in \cite{LO12b}, we proposed a new and simple approach to derive stiff order conditions for exponential Runge--Kutta methods up to order five. With these order conditions at hand, we are going to construct a fifth-order method in this paper. We first show that there does not exist an explicit exponential Runge--Kutta method of order~5 with less than or equal to 6~stages. On the other hand, we are able to construct a family of fifth-order methods with 8~stages. These methods satisfy some of the order conditions only in a weakened form, however, this will be sufficient for obtaining convergence results for semilinear parabolic problems. We note that exponential Runge--Kutta methods may also be applied to the solution of other problems, see for example \cite{DP11,D09}.

The paper is organized as follows. In section \ref{sc2}, we recall
exponential Runge--Kutta methods for our further analysis.
Our abstract framework is given in section \ref{sc3}. In section \ref{sc4}, we recall the stiff order conditions derived in \cite{LO12b}. A new convergence result is also given in this section. Section~\ref{sc5} is devoted to the construction of exponential Runge--Kutta methods of order~5. In section 6, a numerical experiment is presented to illustrate the order of the new method. The main results of the paper are Theorem~\ref{th4.1}, Theorem~\ref{th5.1} and the new integrator $\mathtt{expRK5s8}$.

Throughout the paper, $C$ will denote a generic constant that may have different values at
different occurrences.
%% SECTION 2: ------------------------------------------------------------------
\section{Exponential Runge--Kutta methods in a restated form}
\label{sc2}
For solving \eqref{eq1.1}, we consider the following class of $s$-stage explicit exponential Runge--Kutta methods \cite{HO05b}, reformulated as in \cite{LO12b}
\begin{subequations} \label{eq2.1}
\begin{align}
 U_{ni}&= u_n + c_i h_n \varphi _{1} ( c_i h_n A)F(u_n) +
 h_n \sum_{j=2}^{i-1}a_{ij}(h_n A) D_{nj}, \  1\leq i\leq s,  \label{eq2.1a} \\
u_{n+1}& = u_n + h_n \varphi _{1} ( h_n A)F(u_n) + h_n \sum_{i=2}^{s}b_{i}(h_n A) D_{ni}  \label{eq2.1b}
\end{align}
with
\begin{equation} \label{eq2.1c}
  D_{ni}= g ( U_{ni})- g(u_n ), \qquad  2\leq i\leq s.
\end{equation}
\end{subequations}
As usual, $h_n = t_{n+1}-t_n >0$ denotes the time step size and the $c_i $ are the nodes. The internal stages $U_{ni}$ approximate the exact solution at $t_n +c_i h_n$. For the sake of completeness, one can define $U_{n1}=u_n$ and $c_1=0$. However, we note that these quantities do not enter the scheme anyway. The coefficients $a_{ij}(z)$ and $b_i (z)$ are chosen as linear combinations of the entire functions $\varphi_{k}(z)$ and scaled versions thereof. These functions are given by
\begin{equation} \label{eq2.2}
\varphi_{0}(z)=\ee^z, \q
\varphi_{k}(z)=\int_{0}^{1} \ee^{(1-\theta )z} \frac{\theta ^{k-1}}{(k-1)!}\,\text{d}\theta , \quad k\geq 1.
\end{equation}
They satisfy the recurrence relation
\begin{equation} \label{eq2.3}
\varphi _{k+1}(z)=\frac{\varphi_{k}(z)-\varphi_{k}(0)}{z}, \q k\geq 0.
\end{equation}
The reformulated scheme \eqref{eq2.1} can be implemented more efficiently than its original form, and it
offers many advantages for the error analysis (see \cite{LO12b}).

As shown in \cite{HO05b}, exponential Runge--Kutta methods are invariant under the transformation of
non-autonomous problems
\begin{equation} \label{eq2.3a}
u'(t)=F(t,u(t))=Au(t)+g(t,u(t))
\end{equation}
to \eqref{eq1.1} by adding $t'=1$. Scheme \eqref{eq2.1} can be applied to \eqref{eq2.3a} by replacing $F(u_n)$ in \eqref{eq2.1a}, \eqref{eq2.1b} by $F(t_n,u_n)$ and $D_{ni}$ in \eqref{eq2.1c} by
\begin{equation*} \label{eq2.3b}
D_{ni}=g (t_n +c_i h_n, U_{ni})-g (t_n, u_n).
\end{equation*}
As a consequence of this invariance we will consider henceforth the autonomous case only. However, all stated results stay \emph{mutatis mutandis} valid for the non-autonomous problem \eqref{eq2.3a} as well.
%% section 3-------------------------------------
\section{Analytical framework}\label{sc3}
Our analysis will be based on an abstract framework of analytic semigroups
on a Banach space $X$ with norm $\| \cdot \|$. Background information on semigroups can be found in the
monograph \cite{PAZY83}.

Throughout the paper we consider the following assumptions.

{\em Assumption 1. The linear operator $A$ is the infinitesimal generator of an analytic semigroup
$\ee^{tA}$ on $X$.}

This assumption implies that there exist constants $M$ and $\omega $ such that
\begin{equation} \label{eq3.1}
\|\ee^{tA}\|_{X\leftarrow X}\leq M\ee^{\omega t}, \quad t\geq 0.
\end{equation}
In particular, the expressions $\varphi_k(h_nA)$ and consequently the coefficients $a_{ij}(h_n A)$
and $b_{i}(h_n A)$ of the method are bounded operators, see \eqref{eq2.2} for $z=h_n A$. This property is crucial in our proofs.

Under Assumption 1, the following stability bound was proved in \cite[Lemma 1]{HO05a}: There exists a constant $C$ such that
\begin{equation} \label{eq3.2}
\left \|h A \sum_{j=1}^{n}\ee^{j h A} \right\|_{X\leftarrow X}  \leq C.
\end{equation}
This bound holds uniformly for all $n\geq 1$ and $h>0$ with $0<nh\le T-t_0$. We will employ this bound later on.

For high-order convergence results, we require the following regularity assumption.

{\em Assumption 2. We suppose that (\ref{eq1.1}) possesses a sufficiently smooth solution $u:[t_0, T]\to X$
with derivatives in $X$ and that $g:X \to X$ is sufficiently often Fr\'echet differentiable in a strip
along the exact solution. All occurring derivatives are assumed to be uniformly bounded.}

Assumption 2 implies that $g$ is locally Lipschitz in a strip along the exact solution.
It is well known that semilinear reaction-diffusion-advection equations can be put into
this abstract framework, see \cite{H81}.
%% section 4----------------------------------------------------------
\section{Local error, stiff order conditions and convergence results for fifth-order methods}\label{sc4}
In this section, we recall some notations and results from \cite{LO12b} that will be used later. We further give a result which allows us to relax the stiff order conditions so that high-order convergence is still guaranteed.
\subsection{Local error}\label{subsc4.1}
For the error analysis of scheme (\ref{eq2.1}), we consider one step with initial value $\tilde{u}_n=u(t_n)$ on the exact solution, i.e.
\begin{subequations} \label{eq4.1}
\begin{align}
\widehat{U}_{ni}&=\tilde{u}_n + c_i h_n \varphi_{1} ( c_i h_n A)F(\tilde{u}_n) +
h_n \sum_{j=2}^{i-1}a_{ij}(h_n A)  \widehat{D}_{nj}, \label{eq4.1a}\\
\hat{u}_{n+1}&= \tilde{u}_n + h_n \varphi _{1} ( h_n A)F(\tilde{u}_n) +
h_n \sum_{i=2}^{s}b_{i}(h_n A)  \widehat{D}_{ni} \label{eq4.1b}
\end{align}
\end{subequations}
with
\begin{equation} \label{eq4.2}
\widehat{D}_{ni}= g ( \widehat{U}_{ni})- g(\tilde{u}_n ), \q \widehat{U}_{ni}\approx u(t_n +c_i h_n).
\end{equation}
Let $\tilde{u}^{(k)}_n$ denote the $k$th derivative of the exact solution  $u(t)$ of (\ref{eq1.1}),
evaluated at time $t_n$. For $k=1, 2$ we use the common notation $\tilde{u}'_n, \tilde{u}''_n$
for simplicity. We further denote the $k$th derivative of $g(u)$ with respect to $u$ by $g^{(k)} (u)$.

Let $\tilde{e}_{n+1}=\hat{u}_{n+1}- \tilde{u}_{n+1}$
denote the local error, i.e., the difference between the numerical solution $\hat{u}_{n+1}$ after one step starting
from $\tilde{u}_{n}$ and the corresponding exact solution of (\ref{eq1.1}) at $t_{n+1}$, and let
\begin{equation}\label{eq4.3}
\psi_{j} (h_nA)= \sum_{i=2}^{s}b_{i}(h_nA)\frac{c^{j-1}_{i}}{(j-1)!}- \varphi _{j} (h_nA), \q j\geq 2.
\end{equation}
For simplicity, we also use the abbreviations $a_{ij}=a_{ij}(h_nA)$, $b_{i}=b_{i}(h_nA), \\
\varphi_{j,i}=\varphi_{j} (c_i h_nA)$ and
\begin{equation}\label{eq4.4}
\psi_{j,i}=\psi_{j,i} (h_nA)= \sum_{k=2}^{i-1}a_{ik}\frac{c^{j-1}_{k}}{(j-1)!}- c^{j}_{i}\varphi_{j,i}.
\end{equation}
In \cite{LO12b}, we have shown that
\begin{equation}\label{eq4.5}
\begin{aligned}
\tilde{e}_{n+1}&=h^2_n\psi _{2} ( h_n A)\,\mathbf{L}_n + h^3_n\psi _{3} ( h_n A) \,\mathbf{M}_n
+  h^4_n \psi_{4} ( h_n A) \,\mathbf{N}_n \\
&\quad + h^5_n \psi _{5} ( h_n A) \,\mathbf{P}_n +\mathbf{R}_n+ \mathcal{O}(h^6_n)
\end{aligned}
\end{equation}
with
\begin{equation} \label{eq4.6}
\begin{aligned}
\mathbf{L}_n&=g'(\tilde{u}_n) \tilde{u}'_n,\\
\mathbf{M}_n&=g'(\tilde{u}_n)\tilde{u}''_n+g''(\tilde{u}_n)(\tilde{u}'_n,\tilde{u}'_n),    \\
\mathbf{N}_n&=g'(\tilde{u}_n)\tilde{u}^{(3)}_n +3g'' (\tilde{u}_n)(\tilde{u}'_n,\tilde{u}''_n)
+ g^{(3)} (\tilde{u}_n)(\tilde{u}'_n,\tilde{u}'_n, \tilde{u}'_n),    \\
 \mathbf{P}_n&=g'(\tilde{u}_n)\tilde{u}^{(4)}_n+ 3g'' (\tilde{u}_n)(\tilde{u}''_n, \tilde{u}''_n )
+ 4g'' (\tilde{u}_n)(\tilde{u}'_n, \tilde{u}^{(3)}_n)  \\
&\quad + 6g^{(3)} (\tilde{u}_n)(\tilde{u}'_n,\tilde{u}'_n, \tilde{u}''_n)
+ g^{(4)} (\tilde{u}_n)(\tilde{u}'_n,\tilde{u}'_n, \tilde{u}'_n, \tilde{u}'_n),
\end{aligned}
\end{equation}
and the remaining terms
\begin{align}
&\mathbf{R}_n= h^3_n \sum_{i=2}^{s}b_{i} g'(\tilde{u}_n) \psi_{2,i} \,\mathbf{L}_n
+ h^4_n \sum_{i=2}^{s}b_{i} g'(\tilde{u}_n) \psi_{3,i} \,\mathbf{M}_n \notag \\
& + h^4_n  \sum_{i=2}^{s}b_{i} g'(\tilde{u}_n) \sum_{j=2}^{i-1}a_{ij} g'(\tilde{u}_n) \psi_{2,j}
\,\mathbf{L}_n + h^4_n \sum_{i=2}^{s}b_{i} c_i g''(\tilde{u}_n)\big(\tilde{u}'_n,
\psi_{2,i} \,\mathbf{L}_n \big) \notag  \\
&+ h^5_n \sum_{i=2}^{s}b_{i} g'(\tilde{u}_n)\psi_{4,i}\,\mathbf{N}_n
+ h^5_n \sum_{i=2}^{s}b_{i} g'(\tilde{u}_n) \sum_{j=2}^{i-1}a_{ij} g'(\tilde{u}_n) \psi_{3,j}\,\mathbf{M}_n
\allowdisplaybreaks[4] \notag  \\
& + h^5_n \sum_{i=2}^{s}b_{i} g'(\tilde{u}_n) \sum_{j=2}^{i-1}a_{ij} g'(\tilde{u}_n)\sum_{k=2}^{j-1}a_{jk}
g'(\tilde{u}_n) \psi_{2,k} \,\mathbf{L}_n \label{eq4.6a} \\
&+ h^5_n \sum_{i=2}^{s}b_{i} g'(\tilde{u}_n) \sum_{j=2}^{i-1}a_{ij}c_j g''(\tilde{u}_n)\big(\tilde{u}'_n, \psi_{2,j}
\,\mathbf{L}_n\big)
+ h^5_n \sum_{i=2}^{s}b_{i} c_i g''(\tilde{u}_n)\big(\tilde{u}'_n, \psi_{3,i} \,\mathbf{M}_n \big) \notag  \\
& + h^5_n \sum_{i=2}^{s}b_{i} c_i g''(\tilde{u}_n)\big(\tilde{u}'_n,\sum_{j=2}^{i-1}a_{ij} g'(\tilde{u}_n) \psi_{2,j}
\,\mathbf{L}_n \big)
+ h^5_n \sum_{i=2}^{s} \frac{b_{i}}{2!} g''(\tilde{u}_n)\big(\psi_{2,i} \,\mathbf{L}_n ,
\psi_{2,i} \,\mathbf{L}_n \big) \notag  \\
& + h^5_n \sum_{i=2}^{s}b_{i} \frac{c^2_i}{2!} g''(\tilde{u}_n)\big(\tilde{u}''_n,\psi_{2,i} \,\mathbf{L}_n\big)
+ h^5_n \sum_{i=2}^{s}b_{i} \frac{c^2_i}{2!} g^{(3)} (\tilde{u}_n)\big(\tilde{u}'_n,
\tilde{u}'_n,\psi_{2,i} \,\mathbf{L}_n \big). \notag
\end{align}
The remainder term in the asymptotic expansion \eqref{eq4.5} has the form
\begin{equation}\label{eq:differr}
\sum_{i=2}^s b_i(hA)\mathcal{R}_{ni}-\mathcal{R}_{n}
\end{equation}
with
\begin{equation*}
\begin{aligned}
\mathcal{R}_{ni} &=h^6_{n} \int_{0}^{1} \frac{(1-\theta )^4}{4!} g^{(5)}(\tilde{u}_n + \theta  h_n V_i)(\underbrace{V_i,\ldots, V_i}_{5 \ \text{times}}) \dd\theta ,\\
\mathcal{R}_n&=h^6_{n} \int_{0}^{1} \ee^{(1-\theta )h_n A} \int_{0}^{1} \frac{\theta^{5} (1-s)^4 } {4!}  g^{(5)} (\tilde{u}_n + s\theta  h_{n} V)(\underbrace{V,\ldots, V}_{5\ \text{times}}) \,\text{d}s \,\text{d}\theta .
\end{aligned}
\end{equation*}
Note that the arguments
\begin{equation*}
\begin{aligned}
V_i &=  \frac{1}{h_n}\Big( \widehat{U}_{ni}-\tilde{u}_n \Big) = c_i \varphi _{1} ( c_i h_n A)F(\tilde{u}_n)+ \sum_{j=2}^{i-1}a_{ij}(h_n A) \widehat{D}_{nj}, \\
V&= \frac{1}{\theta h_n}\bigl(u(t_n +\theta h_n)-u(t_n)\bigr)
\end{aligned}
\end{equation*}
remain bounded as $h_n\to 0$. Therefore, \eqref{eq:differr} is bounded by $C h^6_n$, where the constant $C$ only depends on values that are uniformly bounded by Assumptions~1 and~2. This justifies the notation $\mathcal{O}(h^6_n)$ for the remainder in~\eqref{eq4.5}.
%%%-----------------------------------
\subsection{Stiff order conditions for methods of order five}\label{subsc4.2}
By zeroing the corresponding terms in \eqref{eq4.5}, the stiff order conditions for methods of order five can easily be identified, see \cite[Table~1]{LO12b}. We note, however, that the first and the third condition of that table are automatically satisfied in our context since they were used to derive our reformulated scheme \eqref{eq2.1}. As we will heavily need the order conditions for constructing a method of order five, we display them again in Table~1 below, where we have omitted the two redundant conditions.
{
\setlength{\extrarowheight}{1.6 pt}
\renewcommand{\arraystretch}{1.35}
\vspace{-3mm}
\begin{table}[ht!]
\begin{center}
\caption{Stiff order conditions for explicit exponential Runge--Kutta methods up to order 5. The variables
$Z$, $J$, $K$, $L$ denote arbitrary square matrices, and $B$ an arbitrary bilinear mapping of appropriate
dimensions. The functions $\psi_{k,l}$ are defined in \eqref{eq4.4}.}\label{tb1}
\normalsize
\vspace{1.4mm}
\begin{tabular}{ |c|c|c| }
\hline
{\ No.\ } & Stiff order condition & {\ Order\ } \\
\hline
% 1& $\sum_{i=1}^{s}b_{i}(Z) = \varphi_1(Z)$& 1 \\
% \hline
1& $\sum_{i=2}^{s}b_{i}(Z)c_i = \varphi_2(Z)$& 2 \\
% 3& $\sum_{j=2}^{s}a_{ij}(Z) = c_i\varphi_1(c_i Z), \quad i=2,\ldots,s$&2\\
 \hline
2& $\sum_{i=2}^{s}b_{i}(Z)\frac{c_i^2}{2!} = \varphi_3(Z)$&3 \\
3& $\sum_{i=2}^{s}b_{i}(Z) J \psi _{2,i}(Z)=0 $& 3 \\
 \hline
4&$\sum_{i=2}^{s}b_{i}(Z)\frac{c_i^3}{3!} = \varphi_4(Z)$&4 \\
5& $\sum_{i=2}^{s}b_{i}(Z) J \psi _{3,i}(Z)=0 $& 4 \\
6& $\sum_{i=2}^{s}b_{i}(Z) J \sum_{j=2}^{i-1}a_{ij}(Z) J \psi_{2,j}(Z)=0 $& 4 \\
7 & $\sum_{i=2}^{s}b_{i}(Z) c_i K\psi_{2,i}(Z)=0 $& 4 \\
\hline
8&$\sum_{i=2}^{s}b_{i}(Z)\frac{c_i^4}{4!} = \varphi_5(Z)$& 5 \\
9&$ \sum_{i=2}^{s}b_{i}(Z) J \psi_{4,i}(Z)=0 $& 5 \\
10&$\sum_{i=2}^{s}b_{i}(Z) J \sum_{j=2}^{i-1}a_{ij}(Z) J \psi_{3,j}(Z)=0 $& 5 \\
11&\quad $\sum_{i=2}^{s}b_{i}(Z) J \sum_{j=2}^{i-1}a_{ij}(Z) J \sum_{k=2}^{j-1}a_{jk}(Z) J \psi_{2,k}(Z)=0\quad$& 5 \\
12&$\sum_{i=2}^{s}b_{i}(Z) J \sum_{j=2}^{i-1}a_{ij}(Z)c_j K \psi_{2,j}(Z) =0 $& 5 \\
13&$\sum_{i=2}^{s}b_{i}(Z) c_i K \psi_{3,i}(Z)=0 $& 5 \\
14&$\sum_{i=2}^{s}b_{i}(Z) c_i K \sum_{j=2}^{i-1}a_{ij}(Z) J \psi_{2,j}(Z) =0 $& 5 \\
15&$\sum_{i=2}^{s}b_{i}(Z)  B \big(\psi_{2,i}(Z) , \psi_{2,i}(Z) \big)=0 $& 5 \\
16& $\sum_{i=2}^{s}b_{i}(Z) c^2_i L \psi_{2,i}(Z)=0 $& 5 \\
\hline
\end{tabular}
\end{center}
\end{table}
}
%%------------------------------------------------
\subsection{Stability and convergence results}
\label{subsc4.3}
 It was shown in \cite[Theorem 4.1]{LO12b} that the numerical method (\ref{eq2.1}) is stable and converges up to order five, if all conditions of Table~1 are fulfilled.
However, a method satisfying all these conditions will have a plenty of stages. We therefore proceed differently. Motivated by the approach taken in \cite{HO05b}, we relax the order conditions 8--16 in a way described below.

First, we consider the case of constant step size, i.e.~$h_n=h$ for all $n$.
Let $ e_{n+1} = u_{n+1} - u(t_{n+1})=u_{n+1} - \tilde{u}_{n+1}$ denote the global error, i.e., the difference between the numerical and the exact solution of (\ref{eq1.1}), and let $ \hat{e}_{n+1} =u_{n+1} - \hat{u}_{n+1}$ and   $\widehat{E}_{ni}=U_{ni}-\widehat{U}_{ni}$ denote the differences between the two numerical solutions obtained by the schemes (\ref{eq2.1}) and (\ref{eq4.1}), respectively. It is easy to see that the global error at time $t_{n+1}$ satisfies
\begin{equation} \label{eq4.8}
e_{n+1}=\hat{e}_{n+1} + \tilde{e}_{n+1}.
\end{equation}
Subtracting (\ref{eq4.1b}) from (\ref{eq2.1b}) gives
\begin{equation} \label{eq4.9}
\hat{e}_{n+1}=\ee^{h A}e_{n} + h T_{n}
\end{equation}
with
\begin{equation} \label{eq4.10}
T_{n}=\varphi _{1} ( h A) ( g(u_n)-g(\tilde{u}_n)) + \sum_{i=2}^{s}b_{i}(h A)\big(D_{ni}-\ \widehat{D}_{ni}\big).
\end{equation}
Denoting $G_n =g'(\tilde{u}_n)$, \ $G_{ni}=g'( \widehat{U}_{ni} )$ and using the Taylor series expansion of $g(u)$, we get
\begin{subequations} \label{eq4.11}
\begin{align}
g(u_n)-g(\tilde{u}_n)&=G_n e_n + R_n,  \label{eq4.11a} \\
g(U_{ni})-g( \widehat{U}_{ni} )&=G_{ni} \widehat{E}_{ni} + R_{ni} \label{eq4.11b}
\end{align}
\end{subequations}
with remainders $R_n$ and $R_{ni}$
\begin{subequations} \label{eq4.13}
\begin{align}
R_n &=\int_{0}^{1} (1-\theta ) g''(\tilde{u}_n + \theta e_n)(e_n, e_n) \text{d}\theta, \label{eq4.13a}\\
R_{ni} &=\int_{0}^{1} (1-\theta ) g''(\widehat{U}_{ni} + \theta \widehat{E}_{ni})(\widehat{E}_{ni}, \widehat{E}_{ni})
\text{d}\theta. \label{eq4.13b}
\end{align}
\end{subequations}
Employing Assumption~2 shows that
\begin{equation} \label{eq4.12}
\|R_n\|\leq C\|e_n\|^2, \q  \|R_{ni}\|\leq C\|\widehat{E}_{ni}\|^2,
\end{equation}
as long as $e_n$ and $\widehat{E}_{ni}$ remain in a sufficiently small neighborhood of 0.

Subtracting \eqref{eq4.11a} from \eqref{eq4.11b}, we get
\begin{equation} \label{eq4.14}
D_{ni}- \widehat{D}_{ni}=G_{ni} \widehat{E}_{ni} + R_{ni} -(G_n e_n +R_n).
\end{equation}
Inserting \eqref{eq4.11a} and \eqref{eq4.14} into \eqref{eq4.10}, we now obtain
\begin{equation} \label{eq4.15}
T_{n}=b_1 (h A)(G_n e_n +R_n)+\sum_{i=2}^{s} b_i (h A) (G_{ni} \widehat{E}_{ni} + R_{ni})
\end{equation}
with \ $b_1 (hA)=\varphi_1(h A)-\sum_{i=2}^{s} b_i (h A)$.
%%%----LEMMA 4.2------------
\begin{lemma} \label{lm4.2}
Under Assumptions 1 and 2,  there exist bounded operators $\mathcal{K}_{n} (e_n)$ on $X$ such that
\begin{equation}  \label{eq4.16}
T_{n}= \mathcal{K}_{n} (e_n)e_n .
\end{equation}
\end{lemma}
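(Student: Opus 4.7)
The plan is to factor $e_n$ out of every summand in the expression \eqref{eq4.15} for $T_n$. The easy part is handled by the prefactors $b_1(hA), b_i(hA), G_n, G_{ni}$, which are bounded on $X$ by Assumptions 1 and 2. The nontrivial work lies in writing the quadratic remainders $R_n, R_{ni}$ and the stage differences $\widehat{E}_{ni}$ as bounded operators applied to $e_n$.

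First I would deal with $R_n$. Reading off \eqref{eq4.13a}, define
\[
\mathcal{R}_n(e_n)\, v = \int_0^1 (1-\theta)\, g''(\tilde{u}_n+\theta e_n)\bigl(e_n,v\bigr)\,\dd\theta.
\]
By Assumption 2 this is a bounded operator with $\|\mathcal{R}_n(e_n)\|\le C\|e_n\|$, and obviously $R_n=\mathcal{R}_n(e_n)\,e_n$. The same construction turns $R_{ni}$ from \eqref{eq4.13b} into a bounded operator $\widetilde{\mathcal{R}}_{ni}$ applied to $\widehat{E}_{ni}$, provided $e_n$ and $\widehat{E}_{ni}$ stay in the neighborhood in which \eqref{eq4.12} is valid.

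Next I would derive a recursion for $\widehat{E}_{ni}$ in terms of $e_n$. Subtracting \eqref{eq4.1a} from \eqref{eq2.1a}, using $F(u_n)-F(\tilde{u}_n)=Ae_n+(g(u_n)-g(\tilde{u}_n))$ together with \eqref{eq4.11a}, \eqref{eq4.14}, and the identity $z\varphi_1(z)=\varphi_0(z)-1$ (specialisation of \eqref{eq2.3}) at $z=c_ihA$, I obtain
\[
\widehat{E}_{ni}=\ee^{c_ihA}e_n+c_ih\,\varphi_1(c_ihA)\bigl(G_ne_n+R_n\bigr)+h\sum_{j=2}^{i-1}a_{ij}(hA)\bigl(G_{nj}\widehat{E}_{nj}+R_{nj}-G_ne_n-R_n\bigr).
\]
By induction on $i$ (the base case $i=2$ has an empty sum and is immediate), this yields $\widehat{E}_{ni}=\mathcal{E}_{ni}(e_n)\,e_n$ with a bounded operator $\mathcal{E}_{ni}(e_n)$: at each step I substitute the inductive expressions for $\widehat{E}_{nj}$ and $R_{nj}=\widetilde{\mathcal{R}}_{nj}\mathcal{E}_{nj}(e_n)e_n$, and use that $\ee^{c_ihA}$, $\varphi_1(c_ihA)$ and $a_{ij}(hA)$ are bounded on $X$ because they are linear combinations of $\varphi_k(c_ihA)$, which are bounded entire functions of $hA$ under Assumption 1.

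Finally I would substitute into \eqref{eq4.15}. With $\mathcal{R}_{ni}(e_n):=\widetilde{\mathcal{R}}_{ni}\mathcal{E}_{ni}(e_n)$ (bounded, with $\|\mathcal{R}_{ni}(e_n)\|=\mathcal{O}(\|e_n\|)$), this gives
\[
T_n=\Bigl(b_1(hA)\bigl(G_n+\mathcal{R}_n(e_n)\bigr)+\sum_{i=2}^{s}b_i(hA)\bigl(G_{ni}\,\mathcal{E}_{ni}(e_n)+\mathcal{R}_{ni}(e_n)\bigr)\Bigr)e_n=\mathcal{K}_n(e_n)\,e_n,
\]
and $\mathcal{K}_n(e_n)$ is bounded on $X$, which is the claim. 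The only delicate point, and the main obstacle, is the induction step: one must extract a factor of $e_n$ from the quadratic remainders $R_{nj}$ without losing boundedness, and this requires the inductive bound $\|\widehat{E}_{nj}\|\le C\|e_n\|$ combined with the smallness assumption on $e_n$ from \eqref{eq4.12}; everything else is a straightforward manipulation of the bounded operators provided by Assumption 1.
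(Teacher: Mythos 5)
Your proposal is correct and follows essentially the same route as the paper: subtract the stage equations, use $z\varphi_1(z)=\ee^z-1$ to produce the $\ee^{c_ihA}e_n$ term, obtain the recursion \eqref{eq4.17} for $\widehat{E}_{ni}$, and resolve it by induction together with the representation of the quadratic remainders \eqref{eq4.13} as bounded operators applied to $e_n$ (resp.\ $\widehat{E}_{ni}$), before inserting into \eqref{eq4.15}. The only difference is cosmetic: the paper absorbs the $c_ih\varphi_1(c_ihA)(G_ne_n+R_n)$ term and the $-G_ne_n-R_n$ contributions into a $j=1$ summand via the conventions $U_{n1}=u_n$, $c_1=0$, whereas you keep these terms explicit; your write-up is in fact more detailed than the paper's.
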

%%---------
 \begin{proof}
Subtracting \eqref{eq4.1a} from \eqref{eq2.1a}, using $\varphi_{1}(z)=(\ee^z -1)/z$ and employing \eqref{eq4.11b}, we obtain
 \begin{eqnarray}   \label{eq4.17}
\widehat{E}_{ni}=U_{ni}-\widehat{U}_{ni}= \ee^{c_i h A}e_{n} + h \sum_{j=1}^{i-1}a_{ij}(hA) (G_{nj} \widehat{E}_{nj} + R_{nj}).
\end{eqnarray}
Solving recursion \eqref{eq4.17}, using \eqref{eq4.13} with an induction argument and inserting the obtained result into \eqref{eq4.15} yields the formula \eqref{eq4.16}.
\end{proof}
In view of \eqref{eq4.8}, \eqref{eq4.9} and \eqref{eq4.16}, we get
\begin{equation}  \label{eq4.18}
e_{n+1}=\ee^{h A}e_{n} + h \mathcal{K}_{n}(e_n)e_n+\tilde{e}_{n+1}.
\end{equation}
Solving recursion \eqref{eq4.18} and using $e_0=0$ finally yields
\begin{equation} \label{eq4.19}
e_{n}=h\sum_{j=0}^{n-1} \ee^{(n-j)hA} \mathcal{K}_j (e_j)e_j + \sum_{j=0}^{n-1} \ee^{jhA}\tilde{e}_{n-j}.
\end{equation}
We are now ready to give a convergence result for constant step sizes that only requires a weakened form of conditions 8--16 of Table \ref{tb1}.
%%--THEOREM 4.2--------
\begin{theorem}\label{th4.1}
Let the initial value problem \eqref{eq1.1} satisfy Assumptions 1--2. Consider for its numerical solution an explicit exponential Runge--Kutta method \eqref{eq2.1} that fulfills the order conditions 1--7 of
Table~\ref{tb1} (up to order four). Further assume that condition 8 holds in a weakened form with $Z=0$ $(\psi_{5} (0)=0)$ and the remaining conditions of order five hold in a weaker form with $b_i(0)$ in place of $b_i (Z)$. Then, the method is convergent of order~5. In particular, the numerical solution $u_n$ satisfies the error bound
\begin{equation}\label{eq4.20}
\| u_n -u(t_n)\|\leq C h^5
\end{equation}
uniformly on compact time intervals \ $t_0 \leq  t_n =t_0+nh \leq  T$ with a constant $C$ that depends on $T-t_0$, but is independent of $n$ and $h$.
\end{theorem}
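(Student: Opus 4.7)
The plan is to combine the local error expansion \eqref{eq4.5}--\eqref{eq4.6a} with the recursion \eqref{eq4.19} for the global error and close the argument by discrete Gronwall. Using \eqref{eq4.19} together with the boundedness of $\mathcal K_j(e_j)$ established in Lemma~\ref{lm4.2} and the uniform bound $\|\ee^{khA}\|\le C$ on $0\le kh\le T-t_0$ (Assumption~1), the first sum on the right-hand side of \eqref{eq4.19} is absorbed by discrete Gronwall once the second sum is under control. Hence the whole task reduces to showing that the accumulated local errors satisfy $\bigl\|\sum_{j=0}^{n-1}\ee^{jhA}\tilde e_{n-j}\bigr\|\le C h^5$.

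First I would use the full order conditions 1--7 to strip the local error down to its order-five contributions. By the definition \eqref{eq4.3}, conditions 1, 2, 4 are literally $\psi_2(Z)=\psi_3(Z)=\psi_4(Z)\equiv 0$, so the three leading terms $h_n^2\psi_2\mathbf L_n$, $h_n^3\psi_3\mathbf M_n$, $h_n^4\psi_4\mathbf N_n$ of \eqref{eq4.5} vanish identically; likewise, conditions 3, 5, 6, 7 annihilate all $h_n^3$ and $h_n^4$ contributions to $\mathbf R_n$ in \eqref{eq4.6a}. What survives is only the collection of $h_n^5$ terms plus the $\mathcal O(h_n^6)$ remainder already justified after \eqref{eq:differr}. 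The next step is to extract an extra factor $hA$ from each surviving order-five term. Because $\psi_5$ and every $b_i$ are finite linear combinations of entire $\varphi$-functions, the hypothesis $\psi_5(0)=0$ yields the decomposition $\psi_5(hA)=(hA)\,\widetilde\psi_5(hA)$ with $\widetilde\psi_5(z)=(\psi_5(z)-\psi_5(0))/z$ a bounded operator in view of Assumption~1. In every $h^5$ term of $\mathbf R_n$ controlled by the weakened versions of conditions 9--16 I would split $b_i(hA)=b_i(0)+(hA)\,\widetilde b_i(hA)$: the $b_i(0)$ summand cancels the corresponding bracketed expression by the weakened condition, while the $(hA)\widetilde b_i(hA)$ summand supplies the extra factor $hA$. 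Altogether the local error takes the schematic form
\begin{equation*}
\tilde e_{n+1}=h^5\,(hA)\,Q(hA)\,\mathbf F_n+\mathcal O(h^6),
\end{equation*}
where $Q(hA)$ is bounded uniformly in $h$ and, thanks to Assumption~2, $\mathbf F_n$ depends smoothly on $t_n$ through the derivatives of $u$ and of $g$.

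The main obstacle is then to sum these terms without losing the extra power of $h$ absorbed in the prefactor $hA$. The key device is the stability bound \eqref{eq3.2} combined with Abel summation: setting $S_k=\sum_{j=0}^{k}\ee^{jhA}$, the bound \eqref{eq3.2} gives $\|(hA)S_k\|\le C$, and smoothness of $\mathbf F$ yields the discrete Lipschitz estimate $\|\mathbf F_{k+1}-\mathbf F_k\|\le Ch$. Summation by parts then produces
\begin{equation*}
\Bigl\|\sum_{j=0}^{n-1}\ee^{jhA}\,(hA)\,Q(hA)\,\mathbf F_{n-1-j}\Bigr\|\le C
\end{equation*}
uniformly in $n$ and $h$. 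Multiplying by $h^5$ and adding the trivial bound $\sum\mathcal O(h^6)=\mathcal O(nh\cdot h^5)=\mathcal O(h^5)$ gives the required estimate for the accumulated local errors; inserting it into \eqref{eq4.19} and invoking discrete Gronwall delivers \eqref{eq4.20}.
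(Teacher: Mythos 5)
Your proposal is correct and follows essentially the same route as the paper: rewrite the local error as $h^5(\psi_5(hA)-\psi_5(0))\mathbf P_n+h^5\sum_i(b_i(hA)-b_i(0))\mathbf Q_n+\mathcal O(h^6)$, insert it into \eqref{eq4.19}, and control the accumulated sum via the stability bound \eqref{eq3.2} before closing with discrete Gronwall. The only difference is that you spell out the summation-by-parts argument (the extracted factor $hA$ plus the discrete Lipschitz bound on the data), which the paper delegates to the cited Lemma~4.8 of \cite{HO05b}.
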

\begin{proof}
In view of \eqref{eq4.6a} and \eqref{eq4.5}, under the assumptions of Theorem~\ref{th4.1}, we obtain
\begin{equation} \label{eq4.21}
\tilde{e}_{n+1}=h^5 \big( \psi_{5} ( h A)-\psi_{5} (0)\big)\mathbf{P}_n+h^5 \sum_{i=2}^{s} \big(b_i (h A)-b_i (0)\big)\mathbf{Q}_n+h^6 \mathbf{S}_n,
\end{equation}
where $\mathbf{Q}_n$ denotes the terms multiplying $h^5 \sum_{i=2}^{s} b_i $ in \eqref{eq4.6a} and $\|\mathbf{S}_n\|\leq C.$
Inserting \eqref{eq4.21} (with index $n-j-1$ in place of $n$) into \eqref{eq4.19} yields
\begin{equation} \label{eq4.22}
\begin{aligned}
e_{n}&=h\sum_{j=0}^{n-1} \ee^{(n-j)hA} \mathcal{K}_j (e_j)e_j + h^5\sum_{j=0}^{n-1} \ee^{jhA}\big( \psi_{5} ( h A)-\psi_{5} (0)\big)\mathbf{P}_{n-j-1} \\
&\quad + h^5\sum_{j=0}^{n-1} \ee^{jhA} \sum_{i=2}^{s} \big(b_i (h A)-b_i (0)\big)\mathbf{Q}_{n-j-1}  +h^6 \mathbf{S}_{n-j-1}.
\end{aligned}
\end{equation}
We can now employ the same techniques that were used in the proof of \cite[Theorem 4.7]{HO05b}.
The bound \eqref{eq4.20} follows from the stability bound \eqref{eq3.2}, the bound \cite[Lemma 4.8]{HO05b} and an application of a discrete Gronwall lemma.
\end{proof}
Under further assumptions on the regularity of the solution or on the step size sequence, it is possible to generalize Theorem~\ref{th4.1} to variable step sizes. We omit the details.
%SECTION 5 -----------------------------------
\section{Construction of methods of order 5} \label{sc5}
In this section, we will construct exponential Runge--Kutta methods of order five based on the weakened form of the order conditions 8--16, as mentioned in Theorem \ref{th4.1}. In fact, one needs to solve a system of 16 equations (corresponding to 16 order conditions of Table \ref{tb1}) for unknown coefficients $b_i, a_{ij}.$ The obtained coefficients of the method will be displayed in the following Butcher tableau:
\begin{displaymath}
\renewcommand{\arraystretch}{1.1}
\begin{array}{c|ccccc}
c_2&&&&\\
c_3&a_{32}&&&\\
\vdots&\vdots&\ddots&&\\
c_s&a_{s2}&\ldots&a_{s-1,s}&\\
\hline
&b_2&\ldots&b_{s-1}&b_s
\end{array}
\end{displaymath}
We are interested in finding methods with a minimal number of stages. In order to answer this question, we first state the following two technical lemmas.
%%---LEMMA 5.1----
\begin{lemma} \label{lm5.1}
Let $X_1 (z), X_2 (z),\ldots, X_\ell (z)$ be linearly independent (analytic) functions and assume that for all square matrices $Z, J$ of the same format
\begin{equation} \label{eq4.23a}
\sum_{i=1}^{\ell} X_i (Z) J Y_i (Z) = 0
\end{equation}
for given (analytic) functions $Y_i (z)$. Then $Y_i=0$ for all $i=1,2,\ldots,\ell$.
\end{lemma}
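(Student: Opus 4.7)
The plan is to reduce the matrix identity to a scalar one by making very specific choices of the test matrices $Z$ and $J$. First, I would take $Z$ to be a diagonal matrix $Z=\mathrm{diag}(z_1,\ldots,z_d)$ with arbitrary complex entries. Because $X_i$ and $Y_i$ are analytic, we then have $X_i(Z)=\mathrm{diag}(X_i(z_1),\ldots,X_i(z_d))$ and likewise for $Y_i(Z)$. Consequently, the $(p,q)$ entry of $X_i(Z)\,J\,Y_i(Z)$ equals $X_i(z_p)\,J_{pq}\,Y_i(z_q)$. Specializing $J$ to the matrix unit $E_{pq}$ (one in position $(p,q)$, zero elsewhere) kills all other entries, and the hypothesis forces
\begin{equation*}
\sum_{i=1}^{\ell} X_i(z_p)\,Y_i(z_q) = 0 \qquad \text{for every } z_p,z_q \in \mathbb{C}.
\end{equation*}

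Next I would use the linear independence of the $X_i$ to produce $\ell$ evaluation points on which the functions decouple. More precisely, consider the vector $v(z)=(X_1(z),\ldots,X_\ell(z))^{T}\in\mathbb{C}^{\ell}$. If the set $\{v(z):z\in\mathbb{C}\}$ failed to span $\mathbb{C}^{\ell}$, it would lie in a proper hyperplane, i.e.\ there would exist constants $\alpha_1,\ldots,\alpha_\ell$, not all zero, with $\sum_i \alpha_i X_i(z)=0$ for all $z$, contradicting linear independence. Hence we may choose points $\zeta_1,\ldots,\zeta_\ell\in\mathbb{C}$ such that the matrix $M=\bigl(X_i(\zeta_j)\bigr)_{i,j=1}^{\ell}$ is invertible.

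Finally, fixing an arbitrary $w\in\mathbb{C}$ and applying the scalar identity with $z_p=\zeta_j$ (for $j=1,\ldots,\ell$) and $z_q=w$ yields the linear system
\begin{equation*}
M^{T}\begin{pmatrix}Y_1(w)\\ \vdots \\ Y_\ell(w)\end{pmatrix}=0.
\end{equation*}
Since $M$ (and hence $M^T$) is invertible, $Y_i(w)=0$ for every $i$ and every $w\in\mathbb{C}$, which is the desired conclusion.

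The main obstacle I anticipate is the middle step: upgrading pointwise linear independence of the functions $X_i$ to the existence of $\ell$ evaluation points at which the sampled vectors $v(\zeta_j)$ are actually linearly independent. This is exactly the content of the span argument above, but it is worth spelling out explicitly because the rest of the proof is just bookkeeping with diagonal matrices and matrix units. Once Step~3 is in place, solving the resulting $\ell\times\ell$ linear system is immediate.
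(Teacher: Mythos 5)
Your proposal is correct and follows essentially the same route as the paper: both reduce the matrix identity to the scalar identity $\sum_i X_i(\lambda)Y_i(\mu)=0$ by evaluating at a diagonal $Z$ and a suitably chosen $J$ (the paper uses a $2\times 2$ diagonal $Z$ with the swap matrix, you use matrix units), and then invoke linear independence of the $X_i$ to conclude $Y_i\equiv 0$. Your extra step constructing the invertible sample matrix $M$ is just a more explicit version of the paper's final sentence, since for fixed $\mu$ the relation $\sum_i Y_i(\mu)X_i(\lambda)=0$ for all $\lambda$ is already a linear dependence among the $X_i$ with coefficients $Y_i(\mu)$.
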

\begin{proof}
Let
$
 Z=\begin{bmatrix}
\lambda &0\\
  0&\mu
\end{bmatrix}
\text{and} \
J=\begin{bmatrix}
  0&1\\
  1&0
\end{bmatrix}.~
$
Inserting these matrices into \eqref{eq4.23a} shows that $\sum_{i=1}^{\ell} X_i (\lambda ) Y_i (\mu ) = 0$ for all $\lambda, \mu $. Since the set $\{X_i (z)\}_{i=1}^{\ell}$ is linearly independent, we get $Y_i (\mu )=0$ for all $ \mu $, which proves the lemma.
\end{proof}
%%---LEMMA 5.2----
\begin{lemma} \label{lm5.2}
Let $X_1 (z), X_2 (z),\ldots, X_\ell (z)$ be linearly independent analytic functions and assume that for all square matrices $Z, J$ of the same format
\begin{equation} \label{eq4.23b}
\sum_{i=1}^{\ell} X_i (Z) J \sum_{j=1}^{m}Y_{ij} (Z)J W_j (Z) = 0
\end{equation}
for given (analytic) functions $Y_{ij} (z), W_j (z) $. Then, for all $i=1,2,\ldots,\ell$, we have
\begin{equation} \label{eq4.23c}
\sum_{j=1}^{m}Y_{ij} (\mu) W_j (\nu ) = 0.
\end{equation}
for all $\mu, \nu$.
\end{lemma}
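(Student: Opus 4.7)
The plan is to extend the trick from Lemma~\ref{lm5.1} so as to accommodate the two factors of $J$ appearing inside the double sum in \eqref{eq4.23b}. Because the identity must hold for \emph{all} square matrices $Z,J$ of compatible size, I am free to plug in one carefully chosen pair.

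Specifically, I would take the $3\times 3$ matrices
\[
Z=\begin{bmatrix} \lambda & 0 & 0 \\ 0 & \mu & 0 \\ 0 & 0 & \nu \end{bmatrix},\qquad
J=\begin{bmatrix} 0 & 1 & 0 \\ 0 & 0 & 1 \\ 0 & 0 & 0 \end{bmatrix}.
\]
Since $Z$ is diagonal, every analytic function $f$ satisfies $f(Z)=\operatorname{diag}(f(\lambda),f(\mu),f(\nu))$. The matrix $J$ acts as a ``shift'': its only nonzero entries are $J_{12}$ and $J_{23}$, so within the triple product $X_i(Z)\,J\,Y_{ij}(Z)\,J\,W_j(Z)$ the only path contributing to the $(1,3)$-entry is $1\xrightarrow{J}2\xrightarrow{J}3$. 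A direct computation then gives
\[
\bigl(X_i(Z)\,J\,Y_{ij}(Z)\,J\,W_j(Z)\bigr)_{13}= X_i(\lambda)\,Y_{ij}(\mu)\,W_j(\nu).
\]

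Reading off the $(1,3)$-entry of \eqref{eq4.23b} therefore yields
\[
\sum_{i=1}^{\ell} X_i(\lambda)\sum_{j=1}^{m} Y_{ij}(\mu)\,W_j(\nu)\;=\;0
\qquad\text{for all }\lambda,\mu,\nu.
\]
For each fixed pair $(\mu,\nu)$ the inner sums are scalars multiplying $X_i(\lambda)$, and linear independence of $\{X_i\}_{i=1}^{\ell}$ forces every such coefficient to vanish; since $\mu,\nu$ were arbitrary, this is precisely \eqref{eq4.23c}.

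The only conceptual step worth flagging is the passage from a $2\times 2$ construction (as in Lemma~\ref{lm5.1}) to a $3\times 3$ construction: with two copies of $J$ in the product one needs three diagonal positions to carry three distinct scalar arguments. Once that choice is made, the argument is essentially routine, so I do not anticipate any genuine obstacle.
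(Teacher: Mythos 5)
Your proof is correct and follows essentially the same route as the paper: a $3\times 3$ diagonal $Z$ carrying the three scalar arguments, a shift-type $J$, and linear independence of the $X_i$ to kill the coefficients. The only (immaterial) difference is that the paper uses the cyclic permutation matrix with an extra $1$ in the $(3,1)$ position rather than your nilpotent upper shift; both isolate the product $X_i(\lambda)Y_{ij}(\mu)W_j(\nu)$ in a single matrix entry.
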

\begin{proof}
We choose the matrices
\begin{equation*}
 Z=\begin{bmatrix}
\lambda &0 & 0 \\
  0&\mu & 0 \\
  0& 0& \nu
\end{bmatrix}, \quad
J=\begin{bmatrix}
  0&1&0\\
  0&0&1 \\
  1&0&0
\end{bmatrix}
\end{equation*}
and insert them into \eqref{eq4.23b}. This shows that
$\sum_{i=1}^{\ell} X_i (\lambda )\sum_{j=1}^{m}Y_{ij} (\mu ) W_j (\nu ) = 0$ for all $\lambda, \mu, \nu $.
Since the set $\{X_i (z)\}_{i=1}^{\ell}$ is linearly independent, we obtain
$\sum_{j=1}^{m} Y_{ij} (\mu ) W_j (\nu )= 0 $ for all $\mu, \nu $.
\end{proof}
With the results of Lemmas \ref{lm5.1} and \ref{lm5.2} at hand, we are now in the position to state the main result of this section.
%%--THEOREM 4.3--------
\begin{theorem}\label{th5.1}
There does not exist an explicit exponential Runge--Kutta method of order~5 with less than or equal to 6~stages.
\end{theorem}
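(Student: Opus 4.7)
The plan is to argue by contradiction. Assume that an explicit exponential Runge--Kutta method of order~5 exists with $s\le 6$ stages, satisfying all sixteen conditions of Table~\ref{tb1}.

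The first observation is that in the reformulated scheme~\eqref{eq2.1}, each $b_i(Z)$ is a linear combination of entire functions $\varphi_k(Z)$, and the quadrature conditions 1, 2, 4, 8 require the linear span of $\{b_i(Z)\}_{i=2}^{s}$ to contain $\varphi_2(Z),\ldots,\varphi_5(Z)$. For $s\le 6$ there are at most five such coefficient functions, so after passing to a maximal linearly independent subfamily only those can serve as the outer functions in the application of Lemmas~\ref{lm5.1} and~\ref{lm5.2}; any dependent $b_i$ must be written as a combination of the independent ones and its $\psi_{k,i}$ contribution re-absorbed.

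I would then apply Lemma~\ref{lm5.1} to the conditions that are linear in $J$ (conditions 3, 5, 9, 13, 15, 16, and condition~7 with $K$ in place of $J$): each such application yields identities of the form $\sum_{i\in I}\gamma_{i}(Z)\psi_{k,i}(Z)\equiv 0$ with the $\gamma_i(Z)$ built from the dependence coefficients among the $b_i$ and the nodes $c_i$. Applying Lemma~\ref{lm5.2} (together with a straightforward triple-nested extension, obtained by choosing a $4\times 4$ diagonal $Z$ and a cyclic permutation $J$ in the style of the proof of Lemma~\ref{lm5.2}) to conditions 6, 10, 11, 12, 14 similarly extracts nested scalar identities that link the $a_{ij}(\mu)$ to the $\psi_{\ell,k}(\nu)$ for all $\mu,\nu$.

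Combining these extracted relations, I would run a cascading elimination argument: since $\psi_{2,2}(Z)=-c_2^{2}\varphi_2(c_2 hA)$ is not identically zero unless $c_2=0$, the relations emerging from conditions 3, 6 and 11 force certain low-index $b_i$, $a_{ij}$ or nodes to degenerate, i.e.\ effectively reduce the stage count. Propagating this information through the relations on $\psi_{3,i}$ (via conditions 5, 10) and on $\psi_{4,i}$ (via conditions 9, 13, 14) removes further stages. Eventually the number of nontrivial stages remaining is smaller than what is required to satisfy the four quadrature conditions 1, 2, 4, 8 simultaneously with the structural constraints just derived, and the desired contradiction follows.

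The main obstacle is the bookkeeping of this cascade when the $b_i(Z)$'s are themselves linearly dependent: one must repeatedly replace dependent $b_i$'s by independent ones, carry the dependence coefficients through several layers of nested sums, and at each step verify that the linear independence hypothesis in Lemmas~\ref{lm5.1}--\ref{lm5.2} is still available for the outer functions. The triple-nested condition~11 is particularly delicate, but the matrix-choice trick in Lemma~\ref{lm5.2} generalizes verbatim and reduces its analysis to a scalar identity in three independent variables, which is the key technical ingredient that closes the contradiction.
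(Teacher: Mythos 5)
Your overall toolkit is the right one---Lemmas~\ref{lm5.1} and~\ref{lm5.2}, the non-vanishing of $\psi_{2,2}=-c_2^2\varphi_{2,2}$, and the interplay between linear (in)dependence of the $b_i$ and the quadrature conditions---but the proposal has two genuine problems. First, and most importantly, the entire substance of the proof is deferred to a ``cascading elimination argument'' that is asserted rather than carried out. The statements that the extracted relations ``force certain low-index $b_i$, $a_{ij}$ or nodes to degenerate'' and that ``eventually the number of nontrivial stages remaining is smaller than what is required'' are precisely what must be proved, and there is no a priori reason the cascade closes. In the paper this step is a lengthy case analysis over which weights $b_i$ vanish (a full binary tree of possibilities), and in each branch the contradiction comes from a specific computation: either the Vandermonde-type system for conditions 1, 2, 4 becomes singular, or Lemma~\ref{lm5.1} forces relations such as $\psi_{2,i}=\kappa_i\psi_{2,2}$ and $\psi_{3,i}=\kappa_i\psi_{3,2}$ whose compatibility with \eqref{eq4.27} pins down $a_{ij}$ and the nodes, after which condition~6 (via Lemma~\ref{lm5.2}) or the solvability of \eqref{eq4.24} fails. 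None of these concrete closures appear in your sketch, so as written it is a plan, not a proof.

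Second, there is a logical mismatch with what the theorem must cover. You assume all sixteen conditions of Table~\ref{tb1} in the strong form, and your opening observation leans on condition~8 in strong form to conclude that $\varphi_5$ lies in the span of the $b_i$. But in this paper a ``method of order~5'' includes methods satisfying conditions 8--16 only in the weakened form of Theorem~\ref{th4.1} (with $b_i(0)$ in place of $b_i(Z)$ and $\psi_5(0)=0$)---indeed the constructed method $\mathtt{expRK5s8}$ is of exactly this type. A nonexistence proof that presupposes the strong form of condition~8 therefore does not exclude the methods the theorem is actually about. The paper sidesteps this by deriving the contradiction for $s=6$ entirely from conditions 1--7 (which are required in strong form in either reading), and disposes of $s\le 5$ by reduction to $A=0$ and Butcher's classical barrier for five-stage Runge--Kutta methods. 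You would need to restructure your argument so that it, too, uses only conditions that are necessary for the weakened notion of order~5; your proposed extension of Lemma~\ref{lm5.2} to the triple-nested condition~11 is then not needed at all.
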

\begin{proof}
Note that scheme \eqref{eq2.1} reduces to a classical Runge--Kutta method for $A=0$. A famous result by Butcher shows that an explicit Runge--Kutta method of order five  with $s=5$ stages does not exist (see \cite{Butcher64}). We deduce from this result that, for $s=5$, no explicit exponential Runge--Kutta method of order 5 exists.
Therefore, we now consider the case $s=6$. Based on Theorem \ref{th4.1}, one needs to check conditions 1--7 of Table \ref{tb1}. For $s=6$, conditions 1, 2 and 4 are
\begin{subequations} \label{eq4.24}
\begin{align}
b_2 c_2+ b_3 c_3+ b_4 c_4+b_5 c_5+b_6 c_6 &=\varphi_2, \\
b_2 c^2_2+ b_3 c^2_3+ b_4 c^2_4+b_5 c^2_5+b_6 c^2_6&=2\varphi_3,\\
b_2 c^3_2+ b_3 c^3_3+ b_4 c^3_4+b_5 c^3_5+b_6 c^3_6&=6\varphi_4,
\end{align}
\end{subequations}
 and conditions 3, 5, 7 and 6 read
\begin{subequations} \label{eq4.25}
\begin{align}
b_2 J \psi_{2,2}+ b_3 J \psi_{2,3}+ b_4 J \psi_{2,4}+ b_5 J \psi_{2,5}+ b_6 J \psi_{2,6}&= 0, \label{eq4.25a} \\
b_2 J \psi_{3,2}+ b_3 J \psi_{3,3}+ b_4 J \psi_{3,4}+ b_5 J \psi_{3,5}+ b_6 J \psi_{3,6}&= 0, \label{eq4.25b} \\
b_2 c_2 K \psi_{2,2}+ b_3 c_3 K \psi_{2,3}+ b_4 c_4 K \psi_{2,4}+ b_5 c_5 K \psi_{2,5}+ b_6 c_6 K \psi_{2,6}&=0 \label{eq4.25c},
\end{align}
\end{subequations}
and
\begin{equation}\label{eq4.26}
\begin{aligned}
b_3 J a_{32}J\psi_{2,2} & + b_4 J (a_{42}J\psi_{2,2}+a_{43}J\psi_{2,3}) \\
& +b_5 J(a_{52}J\psi_{2,2}+a_{53}J\psi_{2,3}+a_{54}J\psi_{2,4})\\
& +b_6J(a_{62}J\psi_{2,2}+a_{63}J\psi_{2,3}+a_{64}J\psi_{2,4}+ a_{65}J\psi_{2,5})=0,
\end{aligned}
\end{equation}
respectively. From \eqref{eq4.4}, one derives
\begin{equation}\label{eq4.27}
\psi_{2,i} = \sum_{j=2}^{i-1}a_{ij}c_j-c^{2}_i \varphi_{2,i}, \q
\psi_{3,i} = \sum_{j=2}^{i-1}a_{ij}\frac{c^{2}_j}{2!}-c^{3}_i \varphi_{3,i}.
\end{equation}
We note for later use that $c_2\ne 0$ (otherwise $U_{n2}=u_n$ and we are back to the case of 5 stages) and
\begin{equation}\label{eq4.27a}
 \psi_{2,2}=-c^{2}_{2}\varphi_{2,2} \ne 0, \q \psi_{3,2}=-c^{3}_{2}\varphi_{3,2} \ne 0.
\end{equation}

%%----Figure 1------------------------
\begin{figure}[b]
\begin{center}
\includegraphics[scale=0.65]{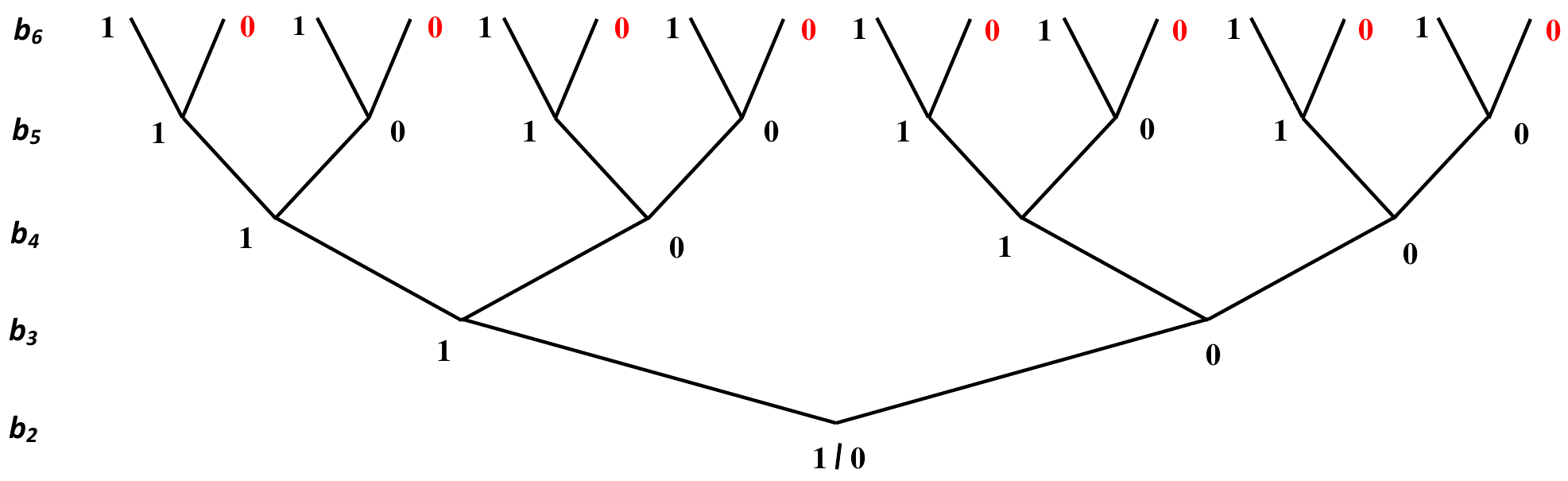}
\end{center}
\caption{\label{fig1} The full binary tree $b_2 b_3 b_4 b_5 b_6$. }
\end{figure}
%%%-----------------------------------

For the subsequent considerations, it turns out to be important whether the coefficients $b_i$ are zero or not. We visualize all occurring possibilities by a full binary tree, see Figure 1. Note that depending on the labeling of $b_2$, Figure 1 actually represents two different trees. Every node of the tree is labeled by the binary values 0, 1 (where 0 refers to $b_i = 0$ and 1 refers to $b_i \ne 0$). The tree begins with a root which is considered as level 1. The children of the root are at level 2, their children are at the next level, and so on. Thus, there are five levels from the root to the leaves of the tree. Motivated by the interpretation, it is convenient to denote the levels 1--5 of the tree by $b_2, b_3, b_4, b_5, b_6,$ respectively (see Figure 1). Subtrees from level $i$ to $j$ will be denoted by $b_i b_{i+1}\ldots b_j$.
We consider the following two cases.

\emph{Case I: } $b_2 \ne 0$ (the root of the tree is now labeled by 1). \\
For fixed $b_2$, there are $2^4=16$ ways of labeling $b_3, b_4, b_5, b_6$ by binary values. However, when $b_6$ is labeled by 0, i.e.~$b_6=0$, we are back to the case $s=5$ which was treated at the beginning of the proof. Therefore, $b_6$ has to be labeled by 1 ($b_6 \ne 0$). It is then easy to exclude the case $b_3 =b_4= b_5 =0$ since the linear system of equations \eqref{eq4.24} has no solution. In total, we are left with the following seven possibilities for labeling $b_2,\ldots,b_6$:  1) 10011; 2) 10101; 3) 11001; 4) 10111; 5) 11011; 6) 11101 and 7) 11111 (see the corresponding paths in Figure 1).

{\em Subcase} 1): 10011 ($b_3=b_4=0,  b_2,  b_5,  b_6 \ne 0 $). In this case, the determinant of the linear system of equations \eqref{eq4.24} is $c_2 c_5 c_6 (c_2 - c_5)(c_2 - c_6) (c_6-c_5)$. If it is zero then the system has no solution due to the fact that the functions $\varphi_2, \varphi_3,  \varphi_4$ are linearly independent.  Therefore, we deduce that the nodes $c_2, c_5$ and $c_6$ must be distinct. Consequently, the system has a unique solution for $b_2,  b_5,  b_6 $. It is also clear that $b_2,  b_5,  b_6$ are linearly independent. However, \eqref{eq4.25a} is then in contradiction with \eqref{eq4.27a} due to Lemma~\ref{lm5.1}.

{\em Subcases} 2) and 3): These two cases can be treated in exactly the same way as subcase 1). We eventually end up with the same contradiction to~\eqref{eq4.27a}.

{\em Subcase} 4): 10111 ($b_3=0,  b_2,  b_4,  b_5,  b_6 \ne 0 $).
To satisfy \eqref{eq4.25a}, the functions $b_2, b_4,  b_5,  b_6$ must be linearly dependent due to Lemma \ref{lm5.1} and~\eqref{eq4.27a}. Hence, there exist scalars $\alpha, \beta, \gamma $, not all zero, such that we have one of the following four possibilities:  (i) $b_2=\alpha b_4+\beta b_5+\gamma b_6$ or (ii) $b_4=\alpha b_2+\beta b_5+\gamma b_6$ or (iii) $b_5=\alpha b_2+\beta b_4+\gamma b_6$ or (iv) $b_6=\alpha b_2+\beta b_4+\gamma b_5$.
Inserting one of these representations into \eqref{eq4.24}, we obtain a system of three linear equations in three unknowns $b_i$ among $ b_2, b_4,  b_5, b_6$. This system cannot have infinitely many solutions since $\varphi_2, \varphi_3,  \varphi_4$ are linearly independent. Moreover, if it has a solution, then the solution is unique and thus the three unknowns $b_i$ are linearly independent. For the sake of presentation, we shall here only treat case (i) in detail (the remaining cases (ii)--(iv) are treated in a similar way). We are thus given the linear system
\begin{equation}\label{eq4.27add}
\begin{aligned}
(\alpha c_2 +c_4)b_4+(\beta c_2+c_5)b_5+(\gamma c_2+c_6)b_6&=\varphi_2   \\
(\alpha c^2_2 +c^2_4)b_4+(\beta c^2_2+c^2_5)b_5+(\gamma c^2_2+c^2_6)b_6&=2\varphi_3   \\
(\alpha c^3_2 +c^3_4)b_4+(\beta c^3_2+c^3_5)b_5+(\gamma c^3_2+c^3_6)b_6&=6\varphi_4
\end{aligned}
\end{equation}
with the three unknowns $b_4,  b_5, b_6$. Inserting the given representation of $b_2$ into \eqref{eq4.25a} and \eqref{eq4.25b} gives
\begin{equation} \label{eq4.27add1}
\begin{aligned}
b_4 J (\alpha \psi_{2,2}+\psi_{2,4})+ b_5 J (\beta \psi_{2,2}+ \psi_{2,5})+ b_6 J(\gamma \psi_{2,2}+ \psi_{2,6})&= 0, \\
b_4 J (\alpha \psi_{3,2}+\psi_{3,4})+ b_5 J (\beta \psi_{3,2}+ \psi_{3,5})+ b_6 J(\gamma \psi_{3,2}+ \psi_{3,6})&= 0.
\end{aligned}
\end{equation}
By applying Lemma~\ref{lm5.1} to \eqref{eq4.27add1}, we derive the relations $\psi_{2,4}=-\alpha \psi_{2,2}$ and $\psi_{3,4}=-\alpha  \psi_{3,2}$. Using \eqref{eq4.27}, we get
\begin{equation}\label{eq4.27add3}
\begin{aligned}
a_{42}c_2 + a_{43}c_3&=c^2_4 \varphi_{2,4}+\alpha c^2_2 \varphi_{2,2},  \\
a_{42}c^2_2 + a_{43}c^2_3&=2c^3_4 \varphi_{3,4}+2\alpha c^3_2 \varphi_{3,2}.
\end{aligned}
\end{equation}
If $c_2=c_3$, system \eqref{eq4.27add3} has either no solution or infinitely many solutions. It is easy to check that the latter occurs only in the case $c_2=c_4$ and $\alpha=-1$ (giving $a_{42}+a_{43}=0$). In that case, however, system  \eqref{eq4.27add} has no solution. If $c_2 \ne c_3$, system \eqref{eq4.27add3} has an unique solution for $a_{42}, a_{43}$. One can check that they are linearly independent if either $c_2 \ne c_4$ or $c_2=c_4$ and $\alpha \ne -1$. In the remaining case $c_2=c_4$ and $\alpha=-1$, the solution is $a_{42}=a_{43}=0$. But, again this gives a contradiction with the fact that \eqref{eq4.27add} has an unique solution.
We now insert the relations $\psi_{2,4}=-\alpha \psi_{2,2}$ and $\psi_{2,5}=-\beta  \psi_{2,2}$ (both follow from \eqref{eq4.27add1}) into \eqref{eq4.26} to get
\begin{equation}\label{eq4.27b}
\begin{aligned}
 & b_4 J (a_{42}J\psi_{2,2}+a_{43}J\psi_{2,3})+ b_5 J\big((a_{52}-\alpha a_{54})J\psi_{2,2}+a_{53}J\psi_{2,3}\big)\\
&\q +b_6J\big((a_{62}-\alpha a_{64}-\beta a_{65})J\psi_{2,2}+a_{63}J\psi_{2,3} \big)=0.
\end{aligned}
\end{equation}
Applying Lemma \ref{lm5.2} to \eqref{eq4.27b} shows that $a_{42}(\mu)\psi_{2,2}(\nu)+ a_{43}(\mu)\psi_{2,3}(\nu)=0$
for all complex $\mu, \nu$. Since $\psi_{2,2}\ne 0$, we get a contradiction with the fact that the functions $a_{42}, a_{43}$ are linearly independent.

{\em Subcases} 5) and 6): We have either $b_4=0$ or $b_5=0$, whereas all other $b_i \ne 0 $. This case is even simpler than subcase 4). From conditions \eqref{eq4.25a} and \eqref{eq4.25b} we deduce in exactly the same way as in subcase 4) that
\begin{equation}\label{eq4.27c}
\psi_{2,3}=\kappa   \psi_{2,2} \q \text{and} \q \psi_{3,3}=\kappa   \psi_{3,2}
\end{equation}
hold with some scalar $\kappa$. For instance, if $b_3, b_5, b_6$ are linearly independent and $b_2=\alpha b_3+\beta b_5+ \gamma b_6$, one derives $\alpha \psi_{2,3}+\psi_{2,2}=\alpha \psi_{3,3}+\psi_{3,2}=0$ which implies that $\alpha \ne 0$ due to \eqref{eq4.27a}. Thus, \eqref{eq4.27c} holds with $\kappa=-\frac{1}{\alpha}$. Now, if either $c_2\ne c_3$ or $c_2= c_3$ and $\kappa \ne 1$, then \eqref{eq4.27c} contradicts \eqref{eq4.27}. In the remaining case $c_2= c_3$ and $\kappa = 1$ (implying $a_{32}=0$), system \eqref{eq4.24} has no solution.

{\em Subcase} 7): 11111 ($b_2,  b_3,  b_4,  b_5,  b_6 \ne 0 $).
Again, an application of Lemma~\ref{lm5.1} to condition \eqref{eq4.25a} shows that $ b_2, b_3, b_4, b_5, b_6$ are linearly dependent. Assume for a moment that four of them are linearly independent and express the fifth weight in terms of the others. This gives five different cases which all eventually lead to condition \eqref{eq4.27c}. In addition, one deduces
\begin{equation}\label{eq4.27d}
\psi_{2,4}=\eta \psi_{2,2} \q \text{and} \q \psi_{3,4}=\eta   \psi_{3,2}
\end{equation}
with some scalar $\eta$.
(For instance, assume that $b_3, b_4, b_5, b_6$ are linearly independent and $b_2=\alpha b_3+\beta b_4+\gamma b_5+ \delta b_6$. Inserting this representation into \eqref{eq4.25a}, \eqref{eq4.25b} and applying once more Lemma \ref{lm5.1} shows that \eqref{eq4.27c} and \eqref{eq4.27d} hold with $\kappa=-\alpha$ and $\eta=-\beta,$ respectively.) As we have shown in subcases 5) and 6), \eqref{eq4.27c} is only satisfied if $c_2= c_3$ and $\kappa = 1$. However, this is not sufficient to conclude that the system \eqref{eq4.24} has no solution in this case. Fortunately, under the condition that $c_2= c_3$, \eqref{eq4.27d} is satisfied only if $c_2= c_4$ and $\eta=1$ (similar to the consideration of \eqref{eq4.27add3} with $-\eta$ in place of $\alpha$). As a consequence, $b_3, b_4, b_5, b_6$ must be linearly dependent. The remaining cases are treated in a similar way. This shows that at most three weights $b_i$ are linearly independent. Therefore, ten combinations have to be checked. Again, we just detail here one typical case. Assume that there exist scalars $\alpha_j , \beta_j ,\gamma_j \ (j=1,2)$, not all zero, such that $b_2=\alpha_1 b_4+\beta_1 b_5+\gamma_1 b_6, \ b_3=\alpha_2 b_4+\beta_2 b_5+\gamma_2 b_6$.
We insert these expressions into \eqref{eq4.24}. If the resulting system has a solution, then it is unique and $b_4, b_5, b_6$ are linearly independent. Note that in this case the first and the second column of the coefficient matrix of this system, denoted by $\Delta_1$ and $\Delta_2$, respectively, are given by
\begin{subequations}\label{eq4.27e}
\begin{align}
\Delta_1&=\left[\alpha_1 c_2+\alpha_2 c_3+ c_4, \ \alpha_1 c^2_2+\alpha_2 c^2_3+ c^2_4, \ \alpha_1 c^3_2+\alpha_2 c^3_3+ c^3_4\right]^{\sf T},\\
\Delta_2&=\left[\beta_1 c_2+\beta_2 c_3+ c_5, \ \beta_1 c^2_2+\beta_2 c^2_3+ c^2_5, \ \beta_1 c^3_2+\beta_2 c^3_3+ c^3_5\right]^{\sf T} \label{eq4.27e2}.
\end{align}
\end{subequations}
An application of Lemma \ref{lm5.1} tells us that conditions \eqref{eq4.25a} and \eqref{eq4.25c} are satisfied if and only if the following conditions hold
\begin{subequations} \label{eq4.28}
\begin{align}
\alpha_2 \psi_{2,3}+\psi_{2,4} =&-\alpha_1 \psi_{2,2}, \label{eq4.28a} \\
c_3 \alpha_2 \psi_{2,3}+c_4 \psi_{2,4} =&-c_2 \alpha_1 \psi_{2,2}, \label{eq4.28b} \\
\beta_2 \psi_{2,3}+\psi_{2,5} =&-\beta_1 \psi_{2,2}, \label{eq4.28c} \\
c_3 \beta_2 \psi_{2,3}+c_5 \psi_{2,5} =&-c_2 \beta_1 \psi_{2,2}, \label{eq4.28d}\\
\gamma_2 \psi_{2,3}+\psi_{2,6} =&-\gamma_1 \psi_{2,2}, \label{eq4.28e}\\
c_3 \gamma_2 \psi_{2,3}+c_6 \psi_{2,6} =&-c_2 \gamma_1 \psi_{2,2}. \label{eq4.28f}
\end{align}
\end{subequations}
This is a linear system with unknowns $\psi_{2,i} \ (i=3,4,5,6)$. We distinguish two cases. First, if \eqref{eq4.28} has a unique solution, then $\psi_{2,i}=\kappa_i\psi_{2,2} \ (i=3,4,5,6)$ with some scalars $\kappa_i$.
Since $\psi_{2,3}=\kappa_3\psi_{2,2}$, we get $a_{32}=(c^2_3\varphi_{2,3}-\kappa_3 c^2_2 \varphi_{2,2})/c_2 $. From this, one derives $\psi_{3,3}$ by using \eqref{eq4.27}. We then note that for satisfying condition \eqref{eq4.25b}, it is necessary to have $\psi_{3,4} =-\alpha_1 \psi_{3,2}-\alpha_2 \psi_{3,3}$ which allows us to compute $\psi_{3,4}$. Knowing $\psi_{2,4}=\kappa_4 \psi_{2,2}$ and $\psi_{3,4}$, one obtains the following system of two linear equations
\begin{subequations}\label{eq4.29}
\begin{align}
a_{42}c_2 + a_{43}c_3&=c^2_4 \varphi_{2,4}-\kappa_4 c^2_2 \varphi_{2,2} \label{eq4.29a} \\
a_{42}c^2_2 + a_{43}c^2_3&=2c^3_4 \varphi_{3,4}+2\alpha_1 c^3_2 \varphi_{3,2}-\alpha_2 a_{32}c^2_2+ 2\alpha_2 c^3_3 \varphi_{3,3} \label{eq4.29b}
\end{align}
\end{subequations}
with two unknowns $a_{42}, a_{43}$. We consider two cases: either \eqref{eq4.29} has a unique solution or it has infinitely many solutions. The latter occurs if and only if $c_2=c_3=c_4$ and $1+(1-\kappa_3)\alpha_2-\kappa_4=1+\alpha_1+\alpha_2=0$. In view of \eqref{eq4.27e}, we get $\Delta_1=(1+\alpha_1+\alpha_2)[c_2, c^2_2, c^3_2\,]^{\sf T}=[0, 0, 0]^{\sf T}$. This immediately shows that~\eqref{eq4.24} has no solution. On the other hand, if \eqref{eq4.29} has a unique solution then $c_2 \ne c_3$. We now insert the (unique) solution of \eqref{eq4.28} into \eqref{eq4.26} to get
$b_4 J (\alpha_2 a_{32}+ a_{42} +\kappa_3 a_{43} ) + b_5 J(\beta_2 a_{32}+ a_{52}+\kappa_3 a_{53}+\kappa_4 a_{54})+ b_6 J(\ldots)=0$. This implies that
\begin{subequations}\label{eq4.29c}
\begin{align}
\alpha_2 a_{32}+ a_{42} +\kappa_3 a_{43}&=0, \label{eq4.29c1} \\
\beta_2 a_{32}+ a_{52}+\kappa_3 a_{53}+\kappa_4 a_{54}&=0 \label{eq4.29c2}
\end{align}
\end{subequations}
because of Lemma~\ref{lm5.1}.
From \eqref{eq4.29}, we compute $a_{42}, a_{43}$ explicitly (by using the known function $a_{32}$). Inserting the values into \eqref{eq4.29c1} gives
\begin{multline*}
\alpha_2 c^2_3 (c^2_3-\kappa_3 c^2_2)\varphi_{2,3}-c^2_2(\kappa_4+\alpha_2 \kappa_3)(c^2_3-\kappa_3 c^2_2)\varphi_{2,2}+c^2_4 (c^2_3-\kappa_3 c^2_2)\varphi_{2,4}\\
+2c^3_4 (\kappa_3 c_2-c_3)\varphi_{3,4}+2\alpha_1c^3_2 (\kappa_3 c_2-c_3)\varphi_{3,2}+2\alpha_2 c^3_3 (\kappa_3 c_2-c_3)\varphi_{3,3}=0.
\end{multline*}
It is easy to check that this condition is satisfied if and only if one of the following six cases occurs: (i) $c_2=c_4$, $\kappa_3=\frac{c_3}{c_2}$, $\kappa_4=1$, $\alpha_2=0$, or (ii) $c_2=c_4$, $\kappa_3=\frac{c^2_3}{c^2_2}$, $\alpha_1=-1$, $\alpha_2=0$, or (iii) $c_2=c_4$, $\kappa_4=1$, $\alpha_1=-1$, $\alpha_2=0$, or (iv) $c_3=c_4$, $\kappa_3 = \kappa_4=\frac{c_3}{c_2}$, $\alpha_2 = -1$, or (v) $c_3=c_4$, $\kappa_3 = \kappa_4$, $\alpha_1 = 0$, $\alpha_2 = -1$, or (vi) $c_3=c_4$, $\kappa_3 = \frac{c^2_3}{c^2_2}$, $\alpha_1 = 0$, $\alpha_2 = -1$. We note that only the cases (i) and (iv) have to be investigated because the remaining ones lead to $\Delta_1=[0, 0, 0]^{\sf T}$ which gives the same contradiction as above.

We first treat case (i). Inserting the known values into \eqref{eq4.29c2} and using the fact that $\psi_{2,5}=\kappa_5\psi_{2,2}$, one obtains
\begin{equation}\label{eq4.29d}
\beta_2 c^2_3 \varphi_{2,3}-c_2 (\beta_2 c_3+ \kappa_5 c_2) \varphi_{2,2}+c^2_5 \varphi_{2,5}=0.
\end{equation}
This condition is satisfied if either $c_2=c_5$, $\beta_2=0$, $\kappa_5=1$ or $c_3=c_5$, $\beta_2=-1$, $\kappa_5=\frac{c_3}{c_2}$. On the one hand, in view of \eqref{eq4.27e2}, these two constraints give $\Delta_2=(1+\beta_1)[c_2, c^2_2, c^3_2\,]^{\sf T}$ and $\Delta_2=\beta_1[c_2, c^2_2, c^3_2\,]^{\sf T}$, respectively. On the other hand, the given relations of case (i) yield $\Delta_1=(1+\alpha_1)[c_2, c^2_2, c^3_2\,]^{\sf T}$ which again shows that the determinant of the resulting system in three unknowns $b_4, b_5, b_6$ is zero. Following the same strategy in case (iv) gives again condition \eqref{eq4.29d}. Note that in this case $\Delta_1=\alpha_1[c_2, c^2_2, c^3_2\,]^{\sf T}$ which yields the same contradiction as in case~(i).

Second, if system \eqref{eq4.28} has infinitely many solutions, then we deduce that two of the nodes among $c_4, c_5, c_6$ are equal. This immediately shows that system \eqref{eq4.24} has no solution.

{\em Case II: } $b_2 = 0$ (the root of the tree is now labeled by 0). \\
In this case, we can remove level 1 from the tree and consider $b_3b_4b_5b_6$. We now distinguish two cases.

{\em Case II.1:} $b_3 \ne 0$ (see the left subtree in Figure 1). By carrying out the same procedure as in case I, one has to consider the following three subcases: 1) 1011; 2) 1101 and 3) 1111.

{\em Subcases} 1) and 2): By assumption, there exists $i \in \{4,5 \}$ with $b_i \ne 0$. From this, we deduce that the nodes $c_3, c_6, c_i$ are distinct, otherwise we get a contradiction to \eqref{eq4.24}. We therefore obtain a unique solution for $b_3,  b_6,  b_i \ne 0 $. An application of Lemma~\ref{lm5.1} shows that the conditions \eqref{eq4.25a} and \eqref{eq4.25b} are satisfied if $ \psi_{2,3}=\psi_{3,3}=0$. But this contradicts \eqref{eq4.27}.

{\em Subcase} 3): 1111 ($ b_3,  b_4,  b_5,  b_6 \ne 0 $). The four weights cannot be linearly independent, otherwise we get from \eqref{eq4.25} that $\psi_{2,3}=\psi_{3,3}=0$ which contradicts \eqref{eq4.27}. We thus have to study the four cases: $b_3=\alpha b_4+\beta b_5+\gamma b_6$ or $b_4=\alpha b_3+\beta b_5+\gamma b_6$ or $b_5=\alpha b_3+\beta b_4+\gamma b_6$ or $b_6=\alpha b_3+\beta b_4+\gamma b_5$. We exemplify again the first case.
To satisfy \eqref{eq4.25a} and \eqref{eq4.25b}, it is necessary to have
\begin{equation}\label{eq4.30}
\psi_{2,4}=-\alpha \psi_{2,3} \ \text{and} \ \psi_{3,4}=-\alpha \psi_{3,3}.
\end{equation}
% We note that $\psi_{2,3}\ne 0$, since otherwise $\psi_{2,4}=\psi_{2,5}=\psi_{2,6}=0$ and thus $\psi_{3,3}\ne 0$ which contradicts \eqref{eq4.27}.
From \eqref{eq4.26} we deduce that $(a_{42}(z) +\alpha a_{32}(z))\psi_{2,2}(\mu) + a_{43}(z)\psi_{2,3}(\mu)=0$. This shows that $a_{42}+\alpha a_{32}$ and $a_{43}$ are linearly dependent. Using this, we derive that \eqref{eq4.30} is satisfied if and only if $\alpha =-1$ and $c_3=c_4$. However, this gives a contradiction with the solvability of \eqref{eq4.24}.

{\em Case II.2:} $b_3= 0$ (see the right subtree in Figure 1). We can now remove level 2 from the tree and consider $b_4b_5b_6$ only. This case can be easily carried out by the same techniques to show that condition \eqref{eq4.26} cannot be satisfied.

Altogether, we have shown that, for $s=6$, the method even does not satisfy the stiff order conditions 1--7 (order four) in the strong sense.
\end{proof}

The case $s=7$ can be analyzed in a similar way by adding an additional level for $b_7$ in Figure 1. The actual computations, however, are much more tedious to carry out. In particular, it is no longer true that the order conditions 1--7 already give a contradiction. On the other hand, 8 stages are enough to construct a method of order~5 as will be shown now.

We start off with the choice $b_2=b_3=b_4=b_5=0$ that significantly simplifies the solution of the order conditions. From conditions 1, 2 and 4, we obtain
\begin{equation*}
\begin{aligned}
 b_6 &=\dfrac{6\varphi_4 -2(c_7 +c_8)\varphi_3 + c_7 c_8 \varphi_2}{c_6(c_6-c_7)(c_6-c_8)}, \\
b_7&=\dfrac{-6\varphi_4 +2(c_6 +c_8)\varphi_3 - c_6 c_8 \varphi_2}{c_7(c_6-c_7)(c_7-c_8)}, \\
b_8&=\dfrac{6\varphi_4 -2(c_6 +c_7)\varphi_3 + c_6 c_7 \varphi_3}{c_8(c_6-c_8)(c_7-c_8)}
\end{aligned}
\end{equation*}
for any choice of distinct (positive) nodes $c_6, c_7, c_8$. Inserting these values (evaluated at $Z=0$) into condition 8 and choosing $c_8=1$, we get the following algebraic equation
\[ 10c^2_6 c^2_7 - (c_6+c_7)(15c_6 c_7+8)+5(c^2_6+c^2_7 )+23c_6 c_7+3=0, \]
which is equivalent to $ 10c_6 c_7=5(c_6+c_7)-3$ if $c_6 \ne 1$ and $c_7 \ne 1$. To solve it, we take $c_6=\frac{1}{5}$ and get $c_7=\frac{2}{3}$. To satisfy conditions 3, 5 and 7, we must enforce $\psi_{2,i}=\psi_{3,i}=0 ~(i=6, 7, 8)$.  Thus, conditions 13, 15 and 16 are satisfied automatically. Next, we choose $a_{6k}=a_{7k}=a_{8k}=0 ~ (k=2,3)$ and enforce $\psi_{2,4}=\psi_{2,5}=0$ in order to satisfy condition 6. It is now easy to see that conditions 12 and 14 are satisfied. As a compromise between conditions 10 and 11, we require~$b_6(0)a_{64}+ b_7(0)a_{74}+b_8(0)a_{84}=0, \ a_{52}=0, \ \psi_{3,5}=\psi_{2,3}=0$ for satisfying both of them. Finally, condition 9 requires
$b_6(0)\psi_{4,6}+ b_7(0)\psi_{4,7}+b_8(0)\psi_{4,8}=0$.
Collecting all the requirements, we obtain a system of eleven linear equations in thirteen unknowns $a_{ij}$ (except the already known values). The coefficients $a_{42}, a_{84}$ can be taken as free parameters. In the following we choose $c_2=c_3=c_5=\frac{1}{2},~ c_4=\frac{1}{4},~ a_{42}=a_{84}=0$. This yields the following fifth-order scheme which will be called $\mathtt{expRK5s8}$:

\begin{displaymath}
\renewcommand{\arraystretch}{1.4}
\begin{tabular}{c|ccccccc}
$\frac{1}{2}$~&&&&&&&\\\
$\frac{1}{2}$ \ &$\frac{1}{2}\varphi_{2,3}$&&& &&&\\
$\frac{1}{4}$& 0 & $\frac{1}{8}\varphi_{2,4}$&& &&&\\
$\frac{1}{2}$& 0 & $-\frac{1}{2}\varphi_{2,5}+2\varphi_{3,5}$ & $2\varphi_{2,5}-4\varphi_{3,5}$& &&& \\
$\frac{1}{5}$& 0 & 0 & $a_{64}$ & $\frac{2}{25}\varphi_{2,6}-\frac{1}{2}a_{64}$ &&& \\
$\frac{2}{3}$& 0 & 0 & $-\frac{125}{162}a_{64}$& $a_{75}$ & $a_{76}$ && \\
$1$& 0 & 0 & 0& $a_{85} $ & $a_{86}$ & $a_{87}$ \\[2pt]
\hline
& 0 & 0& 0 & 0 & $b_6$ & $b_7$ &  $b_8$
\end{tabular}
\end{displaymath}
with
\begin{equation*}
\begin{aligned}
a_{64}&=\tfrac{8}{25}\varphi_{2,6}-\tfrac{32}{125}\varphi_{3,6},\\
a_{75}&=\tfrac{125}{1944} a_{64}-\tfrac{16}{27}\varphi_{2,7}+\tfrac{320}{81}\varphi_{3,7}, \qquad
a_{76}=\tfrac{3125}{3888} a_{64}+\tfrac{100}{27}\varphi_{2,7} -\tfrac{800}{81}\varphi_{3,7}, \\
\phi &=\tfrac{5}{32}a_{64} - \tfrac{1}{28}\varphi_{2,6} +\tfrac{36}{175}\varphi_{2,7}- \tfrac{48}{25}\varphi_{3,7} + \tfrac{6}{175}\varphi_{4,6} + \tfrac{192}{35}\varphi_{4,7}+6\varphi_{4,8}, \\
a_{85}&=\tfrac{208}{3}\varphi_{3,8}-\tfrac{16}{3}\varphi_{2,8}-40\phi, \qquad
a_{86}=-\tfrac{250}{3}\varphi_{3,8}+\tfrac{250}{21}\varphi_{2,8} + \tfrac{250}{7}\phi, \\
a_{87}&=-27\varphi_{3,8}+\tfrac{27}{14}\varphi_{2,8} + \tfrac{135}{7}\phi, \qquad
b_6=\tfrac{125}{14}\varphi_2-\tfrac{625}{14}\varphi_3+\tfrac{1125}{14}\varphi_4, \\
b_7& =-\tfrac{27}{14}\varphi_2+\tfrac{162}{7}\varphi_3-\tfrac{405}{7}\varphi_4, \qquad
b_8=\tfrac{1}{2}\varphi_2-\tfrac{13}{2}\varphi_3+\tfrac{45}{2}\varphi_4.
\end{aligned}
\end{equation*}
%%%-----------------------------------------
 \section{Numerical experiment} \label{sc6}
In this section, we illustrate the sharpness of our error bound in Theorem~\ref{th4.1} with a numerical example that was also used in \cite{HO05b}.
Consider the semilinear parabolic problem
\begin{equation} \label{example1}
\partial_t  u - \partial_{xx} u =\frac{1}{1+u^2}+\Phi (x,t)
\end{equation}
 for $u=u(x,t)$ on the unit interval $[0,1]$ and  $t \in [0,1]$, subject to homogeneous Dirichlet boundary conditions. The source function $\Phi$ is chosen in such a way that the exact solution of the problem is $u(x,t)=x(1-x)\ee^t$. This problem fits in our framework with $X=L^2([0,1])$. We discretize (\ref{example1}) in space by standard finite differences with 200 grid points. This yields a stiff system of the form \eqref{eq2.3a}. We integrate this problem in time by using the new integrator $\mathtt{expRK5s8}$. The errors are measured in a discrete $L^2$ norm at $t=1$. The order plot is displayed in Figure~\ref{fig2} in a double-logarithmic diagram. It is in perfect agreement with Theorem \ref{th4.1}.
\begin{figure}[ht!]
\begin{center}
\includegraphics[scale=0.5]{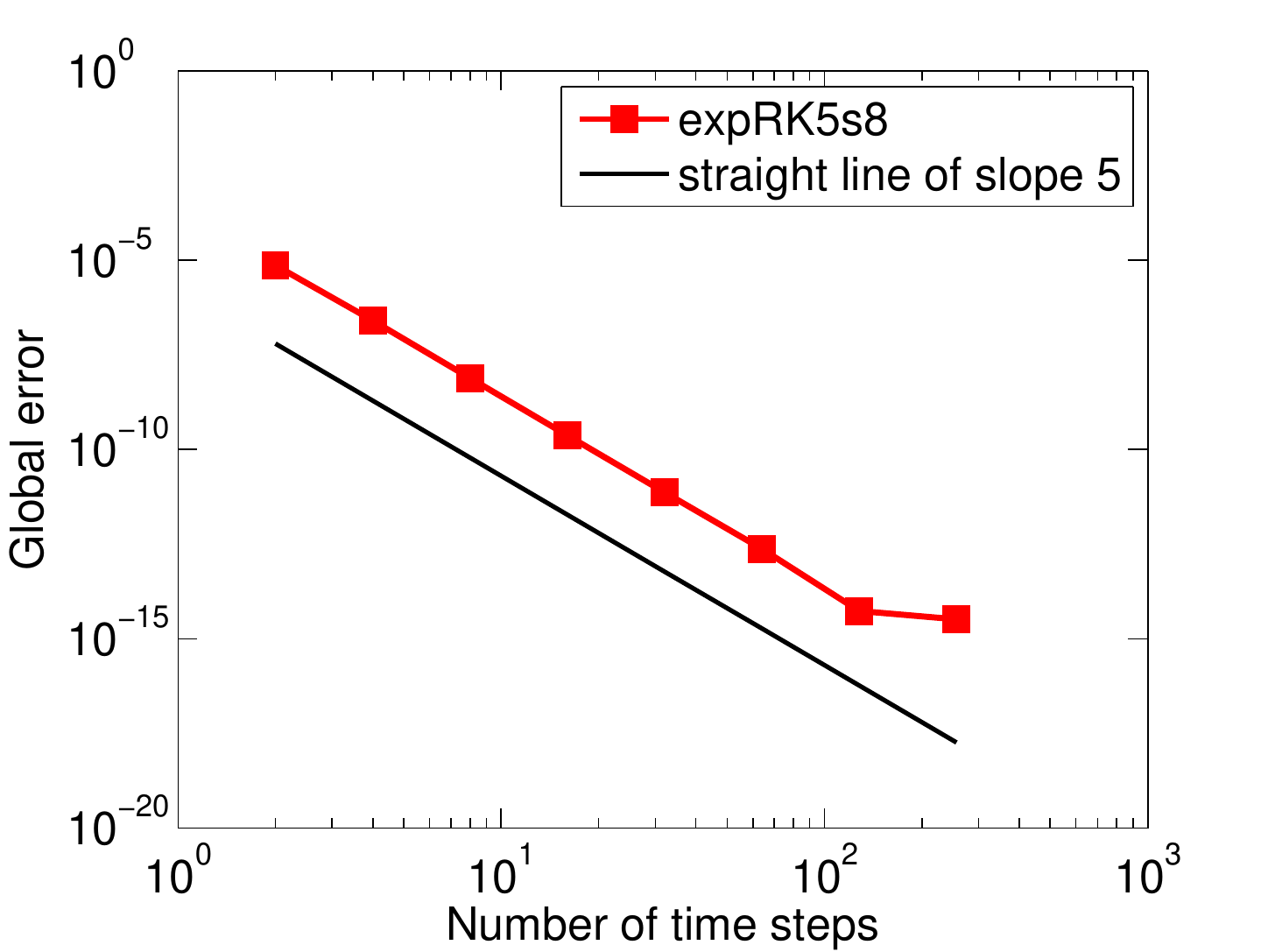}
\end{center}
\caption{\label{fig2} Order plot of the new exponential Runge--Kutta method of order~5, $\mathtt{expRK5s8}$, when applied to example~(\ref{example1}). The errors are plotted against the number of employed time steps. For comparison, we added a straight line of slope 5.}
\end{figure}
%%%%-------------------
%\\
%{\bf References}
%\section*{References}
\bibliographystyle{elsarticle-num}

\end{document}